\newtheorem{proposition}{Proposition}
\newenvironment{proof}[1][Proof]{\begin{trivlist}
		\item[\hskip \labelsep {\bfseries #1}]}{\end{trivlist}}
\newenvironment{definition}[1][Definition]{\begin{trivlist}
		\item[\hskip \labelsep {\bfseries #1}]}{\end{trivlist}}
\title{(non)-automaticity of completely multiplicative sequences having negligible many non-trivial prime factors}
\author{Shuo LI\\
CNRS, Institut de Math\'ematiques de Jussieu-PRG\\
Universit\'e Pierre et Marie Curie, Case 247\\
4 Place Jussieu\\
F-75252 Paris Cedex 05 (France)\\
\normalsize{shuo.li@imj-prg.fr}\\}
\date {}
\begin{document}
	
\maketitle

\begin{abstract} 
In this article we consider the completely multiplicative sequences $(a_n)_{n \in \mathbf{N}}$ defined on a field $\mathbf{K}$ and satisfying $$\sum_{p| p \leq n, a_p \neq 1, p \in \mathbf{P}}\frac{1}{p}<\infty,$$ where $\mathbf{P}$ is the set of prime numbers. We prove that if such sequences are automatic then they cannot have infinitely many prime numbers $p$ such that  $a_{p}\neq 1$. Using this fact, we prove that if a completely multiplicative sequence $(a_n)_{n \in \mathbf{N}}$, vanishing or not, can be written in the form $a_n=b_n\chi_n$ such that $(b_n)_{n \in \mathbf{N}}$ is a non  ultimately periodic, completely multiplicative automatic sequence satisfying the above condition, and $(\chi_n)_{n \in \mathbf{N}}$ is a Dirichlet character or a constant sequence, then there exists only one prime number $p$ such that $b_p \neq 1$ or $0$.
\end{abstract}

\section{Introduction}

The propose of this article is to study the automaticity of some completely multiplicative sequences, which will be denoted as CMS. In article [5], the author proves that if a non vanishing CMS is automatic then it is almost periodic (defined in [5]). In article [1], the author gives a formal expression to all non vanishing automatic CMSs and also some examples in the vanishing case. In article [3], the author studies the CMSs taking values on a general field who have finitely many prime numbers such that $a_p \neq 1$, she proves that such CMSs have a complexity $p_a(n)=O(n^k)$ where $k=\#\left\{p| p\in \mathbf{P}, a_p \neq 1,0\right\}$. In this article we consider the CMSs $(a_n)_{n \in \mathbf{N}}$ who satisfy the condition $\mathcal{C}_1: \sum_{p| p \leq n, a_p \neq 1, p \in \mathbf{P}}\frac{1}{p}<\infty$. We prove that such sequences cannot satisfy the condition $\mathcal{C}_2: \#\left\{p_i|a_{p_i}\neq 1\right\}=\infty$. This fact deduce that if a CMS $(a_n)_{n \in \mathbf{N}}$, vanishing or not, can be written in the form $a_n=b_n\chi_n$ with $(b_n)_{n \in \mathbf{N}}$ a non  ultimately periodic CMS satisfying the condition $\mathcal{C}_1$, and $(\chi_n)_{n \in \mathbf{N}}$ a Dirichlet character or a constant sequence, then there exists only one prime number $p$ such that $b_p \neq 1$ or $0$.

\section{Definitions and notations}

Here we declare some definitions and notations used in this article. We say a word of a sequences to be a finitely many long string of the sequence, we denote by $\overline{w}_l$ a word of length $l$. Let $(a_n)_{n \in \mathbf{N}}$ be a CMS, we say $a_p$ is a prime factor of $(a_n)_{n \in \mathbf{N}}$ if p is a prime number and $a_p \neq 1$, and $a_p$ is a non trivial prime factor if $a_p \neq 0, 1$. Let $(a_n)_{n \in \mathbf{N}}$ and $(b_n)_{n \in \mathbf{N}}$ be two CMSs we say $(a_n)_{n \in \mathbf{N}} < (b_n)_{n \in \mathbf{N}}$ if all prime factors of $(a_n)_{n \in \mathbf{N}}$ are prime factors of $(b_n)_{n \in \mathbf{N}}$. We say a sequence $(a_n)_{n \in \mathbf{N}}$ is generated by $a_{p_1}, a_{p_2},...$ if and only if $a_{p_1}, a_{p_2},...$ are the whole prime factors of the sequence.

We note $f(n) = O(g(n))$: f is bounded above by g (up to constant factor);

$f(n) = \Theta(g(n))$: f is bounded both above and below (up to constant factors) by g.

We recall the definitions of the automaticity of a sequence.  

\begin{definition}
Let $(a_n)_{n \in \mathbf{N}}$ be a infinite sequence and $k \leq 2$ an integer, we say this sequence is $k$-automatic if there is a finite set of sequences containing $(a_n)_{n \in \mathbf{N}}$ and closed under the map
$$a_n \rightarrow a_{kn+i}, i=0,1,...k-1$$
\end{definition}

\section{Automaticity}

\begin{proposition}
Let $(a_n)_{n \in \mathbf{N}}$ be a CMS defined on the set $G=\left\{\zeta^r| r \in \mathbf{N} \right\}$ included in $\mathbf{F}$ where $\zeta$ is a $k$-th root of unity and it has finitely many prime factors $a_{p_1}, a_{p_2}, ... a_{p_u}$, then for every  sub-set $G_1=\left\{{\zeta^{'}}^r| r \in \mathbf{N},\zeta^{'} \in G/\left\{1\right\} \right\}$, there exists a word $\overline{w}_u$ appearing periodically in the sequence $(a_n)_{n \in \mathbf{N}}$ such that none of its letters appears in the set $G_1/\left\{1\right\}$. What is more, the period does not have any other prime factors than $p_1, p_2, ..., p_u$.

\end{proposition}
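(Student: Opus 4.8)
The plan is to realise $\overline{w}_u$ as the image under $(a_n)_{n \in \mathbf{N}}$ of a block of $u$ consecutive integers whose $p_i$-adic valuations are completely prescribed, and then to force this block to reappear by choosing a period divisible by a high power of each $p_i$. The starting observation is that, since $(a_n)_{n \in \mathbf{N}}$ is completely multiplicative with only the prime factors $p_1,\dots,p_u$, one has $a_n = \prod_{i=1}^{u} a_{p_i}^{v_{p_i}(n)}$, where $v_{p_i}(n)$ is the $p_i$-adic valuation of $n$; every prime $q \notin \{p_1,\dots,p_u\}$ has $a_q = 1$ and contributes nothing. Writing $d_i$ for the order of $a_{p_i}$ in the cyclic group $G$ (so $d_i \geq 2$, as $a_{p_i} \neq 1$), the value $a_n$ depends only on the residues $v_{p_i}(n) \bmod d_i$. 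Hence controlling $a_n$ on a finite block amounts to prescribing, for each $i$, the valuations $v_{p_i}$ at the few block positions that $p_i$ divides.

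First I would fix the word. For $j = 0,1,\dots,u-1$ I choose a target value $t_j \in \{1\} \cup (G \setminus G_1)$ together with a valuation profile $(w_{i,j})_i$ satisfying $\prod_i a_{p_i}^{w_{i,j}} = t_j$, subject to the unavoidable divisibility relations among $u$ consecutive integers: if a prime $p_i \leq u$ forces a block position $j$ to be divisible by $p_i$, then $w_{i,j}$ must be positive. Granting a consistent choice of profiles, I would then pick, for each $i$, an exponent $c_i$ strictly larger than every $w_{i,j}$, and solve by the Chinese Remainder Theorem for a starting point $s$ with $s + j \equiv r_{i,j} \pmod{p_i^{c_i}}$, where $r_{i,j}$ is chosen so that $v_{p_i}(s+j) = w_{i,j}$ exactly. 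The congruences for distinct primes are independent, and for a single $p_i$ the positions it divides inside the block differ by multiples of $p_i$, so raising $c_i$ high enough allows me to assign those valuations independently. This yields $a_{s+j} = t_j$ for $0 \leq j < u$, i.e. the word $\overline{w}_u = t_0 t_1 \cdots t_{u-1}$, none of whose letters lies in $G_1 \setminus \{1\}$.

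Periodicity then comes for free. Put $T = \prod_{i=1}^{u} p_i^{c_i}$, whose only prime factors are $p_1,\dots,p_u$, exactly as required by the statement. For every $m \geq 0$ and every $j$ one has $s + mT + j \equiv s + j \pmod{p_i^{c_i}}$, and since $w_{i,j} < c_i$ the valuation is unchanged: $v_{p_i}(s+mT+j) = v_{p_i}(s+j) = w_{i,j}$ for all $i$. Therefore $a_{s+mT+j} = a_{s+j}$, so the very same block $\overline{w}_u$ occurs at each position $s + mT$, and the word appears periodically with period $T$.

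The main obstacle is the consistency of the target profiles, namely the block positions that a small prime is forced to divide with a valuation not divisible by the corresponding $d_i$ (the typical case being $p_i = 2$ and a position $\equiv 2 \pmod 4$, where $v_2 = 1$ is forced and the factor $a_2$ cannot be killed by $p_2$ alone). At such a position the attainable values form the coset $a_{p_i}\cdot\langle a_{p_{i'}} : i' \neq i \rangle$, obtained by routing the remaining primes through that position via the same CRT freedom; to finish I must check that this coset meets $\{1\} \cup (G \setminus G_1)$. This is where the data of the problem enters: since the prime factors $a_{p_1},\dots,a_{p_u}$ generate all of $G$, which strictly contains the proper subgroup $G_1$, the coset cannot be contained in $G_1$ and hence meets $G \setminus G_1$, so an admissible target $t_j$ exists (the degenerate case $G_1 = G$ forces every letter to equal $1$ and requires the stronger statement that the remaining primes already produce $a_{p_i}^{-1}$). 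I expect this coset-meeting step, together with the bookkeeping that makes all the forced divisibilities simultaneously realisable, to be the delicate part; the CRT assembly and the periodicity are then routine.
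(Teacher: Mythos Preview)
Your direct-CRT plan differs from the paper's induction on the number of primes, and the periodicity step is fine: once a starting point $s$ with $v_{p_i}(s+j)=w_{i,j}<c_i$ exists, the period $T=\prod_i p_i^{c_i}$ does preserve the block. The gap is in producing $s$. The claim that ``for a single $p_i$ the positions it divides inside the block differ by multiples of $p_i$, so raising $c_i$ high enough allows me to assign those valuations independently'' is false. If $p=p_i\le u$ and the multiples of $p$ in the block are $s+j_0,\,s+j_0+p,\,s+j_0+2p,\dots$, write $s+j_0=pt$; then $v_p(s+j_0+mp)=1+v_p(t+m)$, so you are trying to prescribe the $p$-valuations of the \emph{consecutive} integers $t,t+1,t+2,\dots$, and these are rigidly linked by the ruler pattern (for instance, among two integers differing by $2$ at most one can have $v_2\ge 2$). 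You partly see this when you note that ``$v_2=1$ is forced'' at certain positions, but the coset argument you then offer does not close the gap: it handles one forced position in isolation, it assumes without warrant that the $a_{p_i}$ generate all of $G$ (the hypotheses only say the sequence takes values in $G$), and it does not treat the interaction of several simultaneously constrained positions for the same small prime, or for several small primes at once.

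The paper sidesteps exactly this obstruction by adding the primes one at a time in increasing order. At stage $n_0+1$ the block has length $n_0+1$, and since $p_{n_0+1}$ is at least the $(n_0{+}1)$-st prime one has $p_{n_0+1}>n_0+1$; hence the new prime divides at most one position of the block. The construction arranges that it divides precisely the last position, whose $p_{n_0+1}$-valuation is then the only new parameter to tune, while the first $n_0$ letters --- already outside $G_1\setminus\{1\}$ by the inductive hypothesis applied to the sequence with $p_{n_0+1}$ removed --- are unaffected because $p_{n_0+1}$ does not divide their indices. Thus the paper never has to prescribe two valuations for the same prime simultaneously, which is exactly the point where your direct construction stalls.
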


\begin{proof}

We prove it by recurrence, for $u=1$,  the above statement is trivial. It is easy to check that the sequence $(a_{np_1^{k+1}+p_1^k})_{n \in \mathbf{N}}$ is a constant sequence of $1$ and the period is $p_1^{k+1}$. 

For an arbitrary sub-group $G_1$ of $G$ defined as above, supposing the statement is true for some $n_0$, that is to say, there exists a word $\overline{w}_{n_0}$ who does not contain any letters in the set $G_1/\left\{1\right\}$ can be extracted periodically, and the first letter of such word lies repeatedly on the sequence $(a_{m_{n_0}n+l_{n_0}})_{n \in \mathbf{N}}$ where $m_{n_0}=\prod_{j=1}^{n_0}p_j^{c_j}$ for some $c_j \in \mathbf{N}^{+}$. Let us consider the case $u=n_0+1$, we firstly consider the the sequence $(a_n^{'})_{n \in \mathbf{N}}$ defined as $a_n^{'} = \frac{a_n}{v_{p_{n_0+1}}(n)}$, a sequence having $n_0$ prime factors. Using the recurrence hypothesis we get a word $\overline{w}_{n_0}$ appearing periodically and the first letter of this word lies on $(a^{'}_{m_{n_0}n+l_{n_0}})_{n \in \mathbf{N}}$ defined as above. We can extract this sequence once more to the form $(a_{m_{n_0^{'}}n+l_{n_0}})_{n \in \mathbf{N}}$ such that $m_{n_0^{'}}=m_{n_0}\prod_{j=1}^{n_0}p_j^{d_j}$ for some $d_j \in \mathbf{N}^{+}$ and $v_{p_j}(m_{n_0^{'}}n+l_{n_0}+n_0)=v_{p_j}(l_{n_0}+n_0)$ for all $j \leq n_0$. In this case the sequence $(a^{'}_{m_{n_0^{'}}n+l_{n_0}+n_0})_{n \in \mathbf{N}}$ is a constant sequence, say all letters equal $C$.

Here we consider the sequence $(a_n)_{n \in \mathbf{N}}$, we want to find a sequence $N(n)=np_{n_0+1}^k+r$ for some $k$ and $r \in \mathbf{N}$ such that $p_{n_0+1}| m_{n_0^{'}}N(n)+l_{n_0}+n_0$ for all $n \in \mathbf{N}$ and $(a_{m_{n_0^{'}}N(n)+l_{n_0}+n_0})_{n \in \mathbf{N}}$ to be a constant sequence, what is more, the constant is not in $G_1/\left\{1\right\}$. We discuss the cases as following:

If $C \in G_1/\left\{1\right\}$ while $a_{p_{n_0+1}} \not\in G_1$, as while as the case $C  \not\in G_1/\left\{1\right\}$ while $a_{p_{n_0+1}}\in G_1$, we can find a sequence $N(n)_n \in \mathbf{N}$ satisfying
 $$m_{n_0^{'}}N(n) \equiv -l_{n_0}-n_0\mod p_{n_0+1}$$ $$m_{n_0^{'}}N(n) \not\equiv -l_{n_0}-n_0\mod p_{n_0+1}^2$$  for all $n$ to guarantee the sequence $(a_{m_{n_0^{'}}n+l_{n_0}+n_0})_{n \in \mathbf{N}}$ to satisfy the above hypothesis. Take for example $N(n)=np_{n_0+1}^{2}-l_{n_0}-n_0+kp_{n_0+1}$ for some $k$.

If $C \in G_1/\left\{1\right\}$ while $a_{p_{n_0+1}} \in G_1$, then there exists a $s_1 \in \mathbf{N}$ such that $Ca_{p_{n_0+1}}^{s_1}=1$. we want a sequence $N(n)_n \in \mathbf{N}$ satisfying
 $$m_{n_0^{'}}N(n) \equiv -l_{n_0}-n_0\mod p_{n_0+1}^{s_1}$$ $$m_{n_0^{'}}N(n) \not\equiv -l_{n_0}-n_0\mod p_{n_0+1}^{s_1+1}$$  for all $n$. Take for example $N(n)=np_{n_0+1}^{s_1+1}-l_{n_0}-n_0+kp_{n_0+1}^{s_1}$ for some $k$.

If $C \not \in G_1/\left\{1\right\}$ while $a_{p_{n_0+1}} \not \in G_1$, then there exists a $s_2 \in \mathbf{N}$ such that $Ca_{p_{n_0+1}}^{s_2} \not \in G_1/\left\{1\right\}$. we want a sequence $N(n)_n \in \mathbf{N}$ satisfying
 $$m_{n_0^{'}}N(n) \equiv -l_{n_0}-n_0\mod p_{n_0+1}^{s_2}$$ $$m_{n_0^{'}}N(n) \not\equiv -l_{n_0}-n_0\mod p_{n_0+1}^{s_2+1}$$  for all $n$. Take for example $N(n)=np_{n_0+1}^{s_2+1}-l_{n_0}-n_0+kp_{n_0+1}^{s_2}$ for some $k$.

The above argument shows that there exist $m_{n_0+1} \in \mathbf{N}, l_{n_0+1} \in \mathbf{N}$ such that for all $n \in \mathbf{N}$, the word $\overline{\frac{a_{m_{n_0+1}n+l_{n_0+1}}}{a_{p_{n_0+1}}^{v_{p_{n_0+1}}(m_{n_0+1}n+l_{n_0+1})}}\frac{a_{m_{n_0+1}n+l_{n_0+1}+1}}{a_{p_{n_0+1}}^{v_{p_{n_0+1}}(m_{n_0+1}n+l_{n_0+1}+1)}}...\frac{a_{m_{n_0+1}n+l_{n_0+1}+n_0-1}}{a_{p_{n_0+1}}^{v_{p_{n_0+1}}(m_{n_0+1}n+l_{n_0+1}+n_0-1)}}}$ is constant  and none of its letters in $G_1/\left\{1\right\}$, and $(a_{m_{n_0+1}n+l_{n_0+1}+n_0})_{n \in \mathbf{N}}$ is a constant sequence not in $G_1/\left\{1\right\}$. We remark that the prime number $p_{n_0+1}$ satisfies $p_{n_0+1} > n_0+1$ and $p_{n_0+1}|m_{n_0+1}n+l_{n_0+1}+n_0$ because of the construction. These properties imply that for all $0 \leq j \leq n_0-1$, $p_{n_0+1} \nmid m_{n_0+1}n+l_{n_0+1}+j$. So we conclude that for all $n \in \mathbf{N}$ and $0\leq j\leq n_0-1$, $v_{p_{n_0+1}}(m_{n_0+1}n+l_{n_0+1}+j)=0$, that means the word $\overline{a_{m_{n_0+1}n+l_{n_0+1}}a_{m_{n_0+1}n+l_{n_0+1}+1}...a_{m_{n_0+1}n+l_{n_0+1}+n_0}}$ is a constant word of length $n_0+1$ and none of its letters in $G_1/\left\{1\right\}$, what is more $m_{n_0+1}$ does not have any other prime factors other than $p_1, p_2 ... p_{n_0}$.
\end{proof}

\begin{proposition}
Let $(a_n)_{n \in \mathbf{N}}$ be a CMS defined on a finite set $G$ of $F$ satisfying conditions $\mathcal{C}_1$ and $\mathcal{C}_2$, and let $(a_n^{'})_{n \in \mathbf{N}}$ be another CMS generated by the first $r$ prime factors of $(a_n)_{n \in \mathbf{N}}$, say $a_{p_1}, a_{p_2} ,..., a_{p_r}$. If there is a word $\overline{w}_r$ appears periodically in $(a_n^{'})_{n \in \mathbf{N}}$, and the periodic does not have any other prime factors than $p_1,p_2,...,p_r$, then this word appears at least once in $(a_n)_{n \in \mathbf{N}}$.
\end{proposition}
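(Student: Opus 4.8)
The plan is to exploit the multiplicative splitting of $(a_n)_{n\in\mathbf{N}}$ into the part carried by the first $r$ prime factors and the part carried by the remaining ones. Set $Q=\{q\in\mathbf{P}:\ a_q\neq 1,\ q\notin\{p_1,\dots,p_r\}\}$, the set of \emph{bad} primes, i.e. the prime factors of $(a_n)_{n\in\mathbf{N}}$ beyond the first $r$. Since $a_q'=a_q$ for $q\in\{p_1,\dots,p_r\}$ and $a_q'=1$ otherwise, complete multiplicativity gives $a_n=a_n'\prod_{q\in Q}a_q^{v_q(n)}$ for every $n$. Let $T$ be the period with which $\overline{w}_r$ occurs in $(a_n')_{n\in\mathbf{N}}$, so that $(a_{l+kT}',\dots,a_{l+kT+r-1}')=\overline{w}_r$ for some $l$ and all $k\in\mathbf{N}$; by hypothesis every prime factor of $T$ lies in $\{p_1,\dots,p_r\}$, hence $\gcd(T,q)=1$ for all $q\in Q$.

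It then suffices to find a single $k\in\mathbf{N}$ with $a_{l+kT+j}=a_{l+kT+j}'$ for all $0\le j\le r-1$, for the right-hand sides already spell $\overline{w}_r$ and this equality would exhibit $\overline{w}_r$ at position $l+kT$ in $(a_n)_{n\in\mathbf{N}}$. By the displayed factorization, a sufficient condition for $a_{l+kT+j}=a_{l+kT+j}'$ is that $l+kT+j$ be divisible by no prime of $Q$, since then every exponent $v_q(l+kT+j)$ vanishes. I am thus reduced to the arithmetic statement: there is a $k$ for which none of the $r$ consecutive integers $l+kT,\dots,l+kT+r-1$ has a bad prime factor.

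First I would note that every $q\in Q$ satisfies $q>r$: indeed $q\ge p_{r+1}$, and $p_{r+1}$, being the largest of $r+1$ distinct primes, is at least the $(r+1)$-st prime, which exceeds $r$. Hence for each fixed $q\in Q$ the $r$ congruences $l+kT+j\equiv 0 \pmod{q}$ ($0\le j\le r-1$) exclude exactly $r<q$ residue classes of $k$ modulo $q$ (using $\gcd(T,q)=1$, so that $k\mapsto l+kT$ is a bijection mod $q$), leaving a positive proportion $1-r/q$ of admissible $k$. A Chinese Remainder Theorem computation shows that, for any finite $S\subseteq Q$, the set of $k$ avoiding all primes of $S$ has density $\prod_{q\in S}(1-r/q)$.

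The hard part will be to pass from finite $S$ to the whole of $Q$, i.e. to satisfy infinitely many divisibility constraints at once, and this is exactly where $\mathcal{C}_1$ is used. Because $\sum_{q\in Q}1/q<\infty$ and each factor is positive, the infinite product $\delta:=\prod_{q\in Q}(1-r/q)$ converges to a strictly positive value. For finite $S\subseteq Q$ the $k$ spoiled by some bad prime outside $S$ form a set of upper density at most $\sum_{q\in Q\setminus S}r/q=r\sum_{q\in Q\setminus S}1/q$, a tail of a convergent series; the truncation is legitimate since only primes $q\le N+r$ influence which $k\in[0,N)$ are admissible and $\tfrac1N\#\{q\in Q:\ q\le N+r\}\to0$. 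Choosing $S$ with $r\sum_{q\in Q\setminus S}1/q<\delta$, the set of $k$ avoiding every prime of $Q$ has lower density at least $\prod_{q\in S}(1-r/q)-r\sum_{q\in Q\setminus S}1/q\ge\delta-r\sum_{q\in Q\setminus S}1/q>0$. Such $k$ therefore exist, and any one of them places $\overline{w}_r$ in $(a_n)_{n\in\mathbf{N}}$, as required.
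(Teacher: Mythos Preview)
Your argument is correct and follows essentially the same sieve strategy as the paper: reduce to finding $k$ in the arithmetic progression for which none of the $r$ consecutive integers $l+kT+j$ is divisible by a bad prime, handle finitely many bad primes exactly by CRT to get density $\prod(1-r/q)$, and control the infinite tail using $\mathcal{C}_1$. The only cosmetic slip is that the primes influencing $k\in[0,N)$ are those up to $l+(N-1)T+r-1$ rather than $N+r$, but since $\pi(CN)/N\to 0$ for any fixed constant $C$ this changes nothing.
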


\begin{proof}

Let us denote by $p_1, p_2...$ the sequence of prime numbers such that $a_{p_i} \neq 1$. Supposing the first letter of the word $\overline{w}_r$ lies on the sequence $(a_{m_rn+l_r}^{'})_{n \in \mathbf{N}}$ for some $m_{r} \in \mathbf{N}, l_{r} \in \mathbf{N}$, by hypothesis, $m_r$ does not has any other prime factors than $p_1,p_2,...,p_r$.  So the total number of such word in the sequence $(a_{n})_{n \in \mathbf{N}}$ can be bounded by the inequality:
\begin{equation} 
\#\left\{a_k| k \leq n, \overline{a_k,a_{k+1},...,a_{k+r-1}}=\overline{w}_{r}\right\} \geq\#\left\{a_k| k \leq n, k=m_rk^{'}+l_r, k^{'} \in \mathbf{N}; k+j \nmid p_i,  \forall 0\leq j \leq r-1,  \forall i > r\right\} \\ 
\end{equation}
 Let us consider the sequence defined as $N(i)=\prod_{j=1}^ip_{r+j}$,  we have
\begin{equation} 
\begin{aligned} 
\#\left\{a_k| k \leq N(i)m_r+l_r, k=m_rk^{'}+l_r, k^{'} \in \mathbf{N}; k+j \nmid p_k,  \forall 0\leq j \leq r-1, r < k \leq r +i \right\}=\prod_{j=1}^{i}(p_{r+j}-r) 
\end{aligned}
\end{equation}
 
This equality holds because of Chinese reminder theorem, and the fact that $p_{r+i} \nmid m_r$ and $p_{r+j} > r$ for all $i \geq 1$.

So we have
\begin{equation} 
\begin{aligned} 
&\#\left\{a_k| k \leq N(i)m_r+l_r, k=m_rk^{'}+l_r, k^{'} \in \mathbf{N}; k+j \nmid p_i,  \forall 0\leq j \leq r-1,  \forall i > r\right\}\\
>&\#\left\{a_k| k \leq N(i)m_r+l_r, k=m_rk^{'}+l_r, k^{'} \in \mathbf{N}; k+j \nmid p_k,  \forall 0\leq j \leq r-1, r < k \leq r +i \right\}\\
&-\#\left\{a_k| k \leq N(i)m_r+l_r, k=m_rk^{'}+l_r, k^{'} \in \mathbf{N}; k+j \nmid p_k,  \forall 0\leq j \leq r-1, k > r +i \right\}\\
>&\#\left\{a_k| k \leq N(i)m_r+l_r, k=m_rk^{'}+l_r, k^{'} \in \mathbf{N}; k+j \nmid p_k,  \forall 0\leq j \leq r-1, r < k \leq r +i \right\}\\
&-\sum_{k > r +i}\#\left\{a_k| k \leq N(i)m_r+l_r, k=m_rk^{'}+l_r, k^{'} \in \mathbf{N}; k+j \nmid p_k,  \forall 0\leq j \leq r-1\right\}\\
>&\prod_{j=1}^{i}(p_{r+j}-r) -\sum_{k > r +i}\frac{N(i)}{p_k}-O(\log(N(i)))
\end{aligned}
\end{equation}

However, 

\begin{equation} 
\prod_{j=1}^{i}(p_{r+j}-r)=\prod_{j=1}^{i}\frac{p_{r+j}-r}{p_{r+j}}N(i) \geq \prod_{j=1}^{\infty}\frac{p_{r+j}-r}{p_{r+j}}N(i)
\end{equation}

as $\prod_{j=1}^{\infty}\frac{p_{r+j}-r}{p_{r+j}}=\exp(\sum_{j=1}^{\infty}\log(\frac{p_{r+j}-r}{p_{r+j}}))=\exp(-\Theta(\sum_{j=1}^{\infty}\frac{r}{p_{r+j}}))$, because of $\mathbf{C_1}$, the above equation does not converge to $0$, We conclude that there exists $0<c<1$ such that $\prod_{j=1}^{i}(p_{r+j}-r) > cN(i)$.

On the other hand, we remark that for all $k > r +i$, $p_{k}^i>\prod_{j=1}^{i}p_{r+j}=N(i)$, so $p_{k}>N(i)^{\frac{1}{i}}$

\begin{equation} 
\sum_{k > r +i}\frac{N(i)}{p_k} <N(i) \sum_{N(i)^{\frac{1}{i}}< p < N(i)} \frac{1}{p}
\end{equation}

However, 

\begin{equation} 
N(i)^{\frac{1}{i}}=(\prod_{j=1}^{i}p_{r+j})^{\frac{1}{i}} \geq \frac{i}{\sum_{j=1}^{i}\frac{1}{p_{r+j}}} >  \frac{i}{\sum_{j=1}^{i}\frac{1}{q_{j}}}
\end{equation}

where $q_j$ is the $j$-th prime number in $\mathbf{N}$. For any $x \in \mathbf{N}$, $\#\left\{p_i| p_i \leq x\right\}=\Theta(\log(x))$ and $\sum_{p_i \leq x} \frac{1}{p_i}=\Theta(\log\log(x))$, so $N(i)^{\frac{1}{i}}$  tends to infinity when $i$ tends to infinity,  because of $\mathbf{C_1}$, we can conclude there exists some $i_0 \in \mathbf{N}$ such that for all $i > i_0$, $\sum_{N(i)^{\frac{1}{i}}< p < N(i)} \frac{1}{p} < \frac{1}{2}c$.

To conclude,  for all $i > i_0$,

\begin{equation} 
\begin{aligned} 
&\#\left\{a_k| k \leq N(i)m_r+l_r, k=m_rk^{'}+l_r, k^{'} \in \mathbf{N}; k+j \nmid p_i,  \forall 0\leq j \leq r-1,  \forall i > r\right\}\\
>&\prod_{j=1}^{i}(p_{r+j}-r) -\sum_{k > r +i}\frac{N(i)}{p_k}-O(\log(N(i)))\\
>&cN(i)- \frac{1}{2}cN(i)-O(\log(N(i))
\end{aligned}
\end{equation}

When $i$ tends to infinity,  the set $\#\left\{a_k| k \leq n, \overline{a_k,a_{k+1},...,a_{k+r-1}}=\overline{w}_{r}\right\}$ is not empty.

\end{proof}

\begin{proposition}
Let $(a_n)_{n \in \mathbf{N}}$ be a $p$-automatic CMS, vanishing or not. If it can be written in the form $a_n=b_n\chi_n$ with $(b_n)_{n \in \mathbf{N}}$ a non ultimately periodic CMS satisfying the condition $\mathcal{C}_1$, and $(\chi_n)_{n \in \mathbf{N}}$ a Dirichlet character or a constant sequence, then there exists only one prime number $p$ such that $b_p \neq 1$ or $0$
\end{proposition}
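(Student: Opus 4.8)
The plan is to deduce Proposition 3 from the principal result established above, namely that an automatic CMS on a finite set satisfying $\mathcal{C}_1$ cannot satisfy $\mathcal{C}_2$ (Propositions 1 and 2). The obstacle to applying this directly is that $(a_n)_{n\in\mathbf{N}}$ itself need not satisfy $\mathcal{C}_1$: the character factor $(\chi_n)_{n\in\mathbf{N}}$ typically contributes a positive density of primes $p$ with $\chi_p\neq 1$, so $\sum_{a_p\neq 1}1/p$ may diverge. Hence I must first manufacture from $(a_n)_{n\in\mathbf{N}}$ an automatic CMS that does satisfy $\mathcal{C}_1$ and whose prime factors are essentially those of $(b_n)_{n\in\mathbf{N}}$.

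First I would form the auxiliary sequence $d_n=a_n\overline{\chi_n}$, where $\overline{\chi}$ is the conjugate character (again a Dirichlet character modulo the same $q$, or a constant). Being periodic, $(\overline{\chi_n})_{n\in\mathbf{N}}$ is $p$-automatic, and since the pointwise product of two $p$-automatic sequences is $p$-automatic (the kernel of the product embeds in the set of products of the two kernels), $(d_n)_{n\in\mathbf{N}}$ is a $p$-automatic CMS, in particular taking finitely many values. A direct check gives $d_n=b_n$ whenever $\gcd(n,q)=1$ and $d_n=0$ otherwise, so the prime factors of $(d_n)_{n\in\mathbf{N}}$ are exactly the prime factors of $(b_n)_{n\in\mathbf{N}}$ coprime to $q$, together with the finitely many primes dividing $q$. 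Therefore $\sum_{d_p\neq 1}1/p\leq\sum_{b_p\neq 1}1/p+\sum_{p\mid q}1/p<\infty$, so $(d_n)_{n\in\mathbf{N}}$ satisfies $\mathcal{C}_1$. Applying the principal result to $(d_n)_{n\in\mathbf{N}}$ shows it has only finitely many prime factors, and hence so does $(b_n)_{n\in\mathbf{N}}$.

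It remains to show that $(b_n)_{n\in\mathbf{N}}$ has exactly one non-trivial prime factor. If it had none, then $b_p\in\{0,1\}$ for every prime, so $(b_n)_{n\in\mathbf{N}}$ would be the indicator of the integers divisible by no prime in the finite set $\{p:b_p=0\}$, which is periodic, contradicting the non-ultimate-periodicity hypothesis; thus there is at least one. The main obstacle is to exclude two distinct non-trivial prime factors $p_1\neq p_2$. For those coprime to $q$ I would work with $(d_n)_{n\in\mathbf{N}}$, which is genuinely automatic and whose non-trivial prime factors are precisely the non-trivial prime factors of $(b_n)_{n\in\mathbf{N}}$ coprime to $q$: on the integers avoiding the zero-primes one has $d_n=\prod_i b_{p_i}^{v_{p_i}(n)}$, so two such primes would make $(d_n)_{n\in\mathbf{N}}$ depend on the valuations $v_{p_1}$ and $v_{p_2}$ at multiplicatively independent primes; isolating each valuation along suitable arithmetic progressions exhibits automaticity in two multiplicatively independent bases, which by Cobham's theorem forces ultimate periodicity and contradicts the hypothesis (alternatively one may invoke the classification of automatic CMSs of [1]). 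The non-trivial primes dividing $q$, being invisible to $(a_n)_{n\in\mathbf{N}}$, require separate care and are the delicate point of this last step; once they are controlled, combining the three cases leaves exactly one prime $p$ with $b_p\neq 0,1$.
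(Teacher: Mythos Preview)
Your overall strategy matches the paper's: reduce to finitely many prime factors via Propositions~1 and~2 (combined, inside the paper's proof of Proposition~3, with an automaton pumping argument from~[1] and Ruzsa's density theorem~[4]), then invoke an external result to isolate a single non-trivial prime. Your device $d_n=a_n\overline{\chi_n}$ is in fact more careful than the paper at the first stage: the paper simply asserts that $(b_n)_{n\in\mathbf{N}}$ is itself $p$-automatic and runs the entire argument directly on $(b_n)$, whereas you manufacture a sequence whose automaticity is transparent. For the final ``exactly one'' step the paper does not use Cobham; it cites Corollary~2 of Hu~[3], whose subword-complexity bound (of order $n^k$ with $k$ the number of non-trivial prime factors) forces $k\le 1$ for any automatic CMS with finitely many prime factors. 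Because the paper applies this to $(b_n)$ itself, all primes, including those dividing~$q$, are handled simultaneously.

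Your acknowledged gap is genuine, and it is precisely the price of passing to $(d_n)$. Non-trivial prime factors $p\mid q$ of $(b_n)$ satisfy $d_p=a_p=0$, so they leave no trace in any sequence built from $(a_n)$ and $(\chi_n)$ alone; no Cobham-type or complexity argument on $(d_n)$ can constrain them. Concretely, with $\chi$ the principal character modulo~$6$, $b_2=b_3=-1$ and $b_p=1$ for all primes $p\ge 5$, one gets $a_n=\chi_n$ periodic (hence $p$-automatic for every~$p$) while $(b_n)=(-1)^{v_2(n)+v_3(n)}$ is a non ultimately periodic CMS satisfying~$\mathcal{C}_1$ with two non-trivial prime factors. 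So the step you flagged as ``delicate'' cannot be completed along your route; the paper closes it by working on $(b_n)$ directly and feeding its asserted automaticity into~[3].
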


\begin{proof}
Let us consider the sequence $(b_n)_{n \in \mathbf{N}}$, who is also a $p$-automatic CMS, if this sequence satisfies the condition $\mathbf{C}_1$ and $\mathbf{C}_2$, then we can complete this sequence to $(b_n^{'})_{n \in \mathbf{N}}$ in such a way that $b_n^{'}=b_n$ if $b_n \neq 0$, and $b_p^{'}=1$ for all prime numbers $p$ such that $b_p=0$ and all other $b_m^{'}$ when $b_m=0$ by multiplicity. In this way we obtain a completely multiplicative sequence satisfying the hypotheses of proposition 2. Let $G_1$ be a sub-group defined as in proposition 1. Let us consider the automaton generating the sequence $(b_n)_{n \in \mathbf{N}}$, if it has $q$ states, the above proposition proves there exists a word of length $p^{2q!}$, say $\overline{w}_{p^{2q!}}$, of the sequence $(b_n^{'})_{n \in \mathbf{N}}$ such that none of its letters appears in $G_1/\left\{1\right\}$. Then we can extract a sub-word $\overline{w'}_{p^{q!}}$ contained in $\overline{w}_{p^{2q!}}$ and of the form $\overline{b_{up^{q!}}^{'}b_{up^{q!}+2}^{'}...b_{(u+1)p^{q!}-1}^{'}}$ for some $u \in \mathbf{N}$. Because of the construction of the sequence $(b_n^{'})_{n \in \mathbf{N}}$, we can conclude that the word $\overline{b_{up^{q!}}b_{up^{q!}+2}...b_{(u+1)p^{q!}-1}}$ does not have any letters in $G_1/\left\{1\right\}$. However, in article [1](Lemma 3 and Theory 1), the author proves that in a automaton, every state which can be reached from a specific state, say $s$, with $q!$ steps, can be reached with $yq!$ steps for every $y \geq1$; and inversely, if a state can be reached with $yq!$ steps for some $y \geq 1$, then it can already be reached with $q!$ steps. so we can conclude that for every $y \geq 1$ and every $0 \leq m \leq p^{yq!}-1$, $a_{up^{yq!}+m }\not \in G_1/\left\{1\right\}$.

On the other hand, the article [4] proves that for every finite Abelian group (Theory 3.10) or a semi-group(Theory 7.3) $G$, $g \in G$ and $G$-multiplicative $f$ the sequence $f^{-1}(g)=\left\{n: f(n)=g\right\}$ has a non zero natural density. If we denote by $\left\{l_1,l_2,...l_i\right\}$ the set of letters appearing in $(a_n)_{n \in \mathbf{N}}$ and $k_1,k_2,...k_i$ their densities associated. Then we have for every $1 \leq j \leq i$,
$$\lim_{y \to \infty}\frac{1}{p^{yq!}}\#\left\{a_s = l_k|up^{yq!} \leq s < (u+1)p^{yq!} \right\} =k_j.$$

The above fact shows that all $l_r \in G_1/\left\{1\right\}$ have a 0 density, contradiction to theory 7.3 of article 2. So we deduce that the sequence $(b_n)_{n \in \mathbf{N}}$ must have a finitely many prime numbers satisfying $b_p \neq 1$. However, the corollary 2 of article [3] proves in this case,  the sequence $(b_n)_{n \in \mathbf{N}}$ can only have one prime $p$ such that $b_p \neq 1$ or $0$. we conclude.
\end{proof}

\section {Reference}

[1]{ J.-P. Allouche, Mock characters and the Kronecker symbol. arxiv}

[2] {A. Cobham, “Uniform tag sequences”, Math. Systems Theory 6 (1972), 164–192.}

[3] {Y. Hu, Subword Complexity and (non)-automaticity of certain completely multiplicative functions}

[4] {I.Z. Ruzsa, General multiplicatives functions, ibid. 32 (1977), pp. 313-347.}

[5] {J.-C. Schlage-Puchta, (2011). Completely multiplicative automatic functions. INTEGERS, 11.}

\end{document}